\theoremstyle{plain}
\newtheorem{thm}{Theorem}[section]
\newtheorem{prop}[thm]{Proposition}
\newtheorem{lem}[thm]{Lemma}
\theoremstyle{definition}
\newtheorem{defn}[thm]{Definition}
\newtheorem{rem}[thm]{Remark}
\begin{document}

\title[Quasi-homomorphism and scl]{On quasi-homomorphisms and commutators in the special linear group over a euclidean ring}
\author{Masato MIMURA}
\date{}

\maketitle

\begin{abstract}
We prove that for any euclidean ring $R$ and $n\geq 6$, $\Gamma = SL_n (R)$ has no unbounded quasi-homomorphisms. By Bavard's duality theorem,
this means 
that the stable commutator length vanishes on $\Gamma$. The result is
particularly
interesting for $R = F[x]$ for a certain field $F$ (such as $\mathbb{C}$), because in this case the commutator length
on $\Gamma$ is known to be unbounded. This answers a question of M. Ab\'{e}rt
and N. Monod for $n\geq6$.
\end{abstract}

\section{\textbf{Introduction and main results}}\label{sec:Intro}
Throughout this paper, let $G$, $H$, and $\Gamma$ be discrete groups (in some case uncountable) and we always assume that fields are commutative. For a group $\Gamma$, let $[\Gamma , \Gamma ]$ denote the commutator subgroup of $\Gamma$. For an associative ring $B$ with unit and $n \in \mathbb{N}$, let $I_n$ be the unit of the matrix ring $M_n (B)$ and $1=I_1$. We shall basically use the symbol $B$ for an associative (and not necessarily commutative) ring with unit, the symbol $A$ for an  associative and commutative ring with unit, and the symbol $R$ for a euclidean ring. For an associative and commutative ring $A$ with unit, $SL_n(A)$ denotes the multiplicative group of all elements in $M_n(A)$ whose determinants are $1$. 

A \textit{quasi-homomorphism} on $\Gamma$ is a function $\psi \colon \Gamma \to \mathbb{R}$ such that 
$$
\Delta (\psi) := \sup_{g,h \in \Gamma} |\psi (gh)- \psi (g) -\psi (h) | < \infty, 
$$
and $\Delta (\psi)$ is called the \textit{defect} of $\psi$. 
A quasi-homomorphism $\psi$ on $\Gamma$ is said to be \textit{perturbed from a homomorphism} (or simply, \textit{perturbed}) if there exists a decomposition $\psi =\psi_0 + c$ such that $\psi_0 \colon \Gamma \to \mathbb{R}$ is a homomorphism and $c \colon \Gamma \to \mathbb{R}$ has bounded image. We define the following vector space: 
\begin{equation*}
\widetilde{QH}(\Gamma ):= \{\mathrm{all\ quasi}\textrm{-}\mathrm{homomorphisms\ on\ }\Gamma \} / 
\{\mathrm{all\ perturbed\ quasi}\textrm{-}\mathrm{homomorphisms\ on\ }\Gamma \}.
\end{equation*}
D. B. A. Epstein and K. Fujiwara \cite{EF} have proved that any non-elementary hyperbolic group has infinite dimensional $\widetilde{QH}$. In contrary, M. Burger and N. Monod \cite{BM} have shown that for higher rank lattices $\Gamma$, such as $SL_n (\mathbb{Z})$ $(n \geq 3)$, $\widetilde{QH}(\Gamma )$=0.

Investigations of $\widetilde{QH}$ are of importance because $\widetilde{QH}$ is closely related to the concept of the \textit{stable commutator length}, abbreviated as \textit{scl}, of a group. First we recall the definition of the stable commutator length. Basic references on scl are \cite{Bav} and \cite{Cal}. 

\begin{defn}\label{def:scl}
Let $\Gamma$ be a group. Let $g$ be any element in $[\Gamma ,\Gamma]$.
\begin{itemize}
  \item  The \textit{commutator length} of $g$ in $[\Gamma ,\Gamma]$ 
  (which is written as $\mathrm{cl}(g)$) is the least number of commutators whose product is equal to $g$. Here for unit $e_\Gamma$ in $\Gamma$, we define $\mathrm{cl}(e_\Gamma)=0$. 
  \item  The \textit{scl} (\textit{stable commutator length}) of $g$ in $[\Gamma ,\Gamma]$ is defined by 
  $$
  \mathrm{scl}(g)=\lim_{n\to \infty}\frac{\mathrm{cl}(g^n)}{n}.
  $$
\end{itemize}
\end{defn}
The following relation between $\widetilde{QH}$ and scl is given by Bavard's duality theorem \cite[p.111]{Bav}.  
\begin{thm}\label{thm:scl}$($\cite{Bav}$)$
For a group $\Gamma$, the following are equivalent: 
\begin{itemize}
 \item[$(i)$] The equality $\widetilde{QH}(\Gamma)=0$ holds.
 \item[$(ii)$] The scl $($on $[\Gamma ,\Gamma]$$)$ vanishes identically. 
\end{itemize}
\end{thm}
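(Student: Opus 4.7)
The plan is to reduce the problem to homogeneous quasi-homomorphisms and then establish each implication separately; the converse $(i)\Rightarrow(ii)$ will be the main obstacle and requires a Hahn--Banach duality argument.

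For the reduction, I would show that for any quasi-homomorphism $\psi$ the sequence $\psi(g^n)/n$ converges and defines a \emph{homogeneous} quasi-homomorphism $\bar{\psi}$ (satisfying $\bar{\psi}(g^n) = n\bar{\psi}(g)$) which differs from $\psi$ by a bounded function; existence of the limit follows from the near-subadditivity $|\psi(g^{n+m}) - \psi(g^n) - \psi(g^m)| \leq \Delta(\psi)$ by a Fekete-type argument, and the difference $\bar{\psi}-\psi$ is bounded by $\Delta(\psi)$. Consequently, each class in $\widetilde{QH}(\Gamma)$ has a homogeneous representative, unique modulo an honest homomorphism, so it suffices to analyze homogeneous representatives.

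For $(ii)\Rightarrow(i)$, a short induction on the number of commutators, combined with $\bar{\phi}(x^{-1}) = -\bar{\phi}(x)$ for homogeneous $\bar{\phi}$ and the defect inequality, yields $|\bar{\phi}(g)| \leq (2\,\mathrm{cl}(g)-1)\,\Delta(\bar{\phi})$ for every $g \in [\Gamma,\Gamma]$. Applied to $g^n$ and divided by $n$, this becomes $|\bar{\phi}(g)| \leq 2\,\Delta(\bar{\phi})\,\mathrm{scl}(g)$. Under $(ii)$ we then conclude that $\bar{\phi}$ vanishes on $[\Gamma,\Gamma]$, so $\bar{\phi}$ descends to the abelian group $\Gamma/[\Gamma,\Gamma]$; but on an abelian group every homogeneous quasi-homomorphism is in fact a homomorphism (the identity $\bar{\phi}((xy)^n) = n\bar{\phi}(xy)$ for commuting $x,y$ forces the additivity defect to vanish in the limit). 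Hence the original class is zero in $\widetilde{QH}(\Gamma)$.

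The converse $(i)\Rightarrow(ii)$ is the heart of the matter. I would argue by contraposition: assume $\mathrm{scl}(g_0)>0$ for some $g_0\in[\Gamma,\Gamma]$ and build a homogeneous quasi-homomorphism $\phi$ with $\phi(g_0)\neq 0$. The natural framework is to realise $\mathrm{scl}$ as an $\ell^1$-filling pseudo-norm on a suitable space of conjugation-invariant $1$-chains whose boundary vanishes, with the convex hull of commutators playing the role of the unit ball; since $\mathrm{scl}(g_0)>0$, a Hahn--Banach separation argument produces a conjugation-invariant linear functional $L$ with $L(g_0)\neq 0$ and $|L(c)|\leq 1$ on every commutator $c$. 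The delicate step, which I expect to be the main obstacle, is to promote such an $L$ into an honest homogeneous quasi-homomorphism on all of $\Gamma$ with finite defect controlled by $1$: one must extend $L$ from cycles to individual group elements in a consistent way (using homogeneity on powers and a choice of sections modulo $[\Gamma,\Gamma]$) and verify the quasi-homomorphism bound $\Delta(\phi)<\infty$. Once this is achieved, the sharp duality $\mathrm{scl}(g) = \sup_{\phi} |\phi(g)|/(2\Delta(\phi))$ follows, and $\mathrm{scl}(g_0)>0$ then exhibits a nonzero class in $\widetilde{QH}(\Gamma)$, contradicting $(i)$.
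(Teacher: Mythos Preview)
The paper does not give its own proof of this theorem: it is quoted as Bavard's duality theorem with a reference to \cite{Bav} (and implicitly to the exposition in \cite{Cal}), and is used only as a black box to translate between $\widetilde{QH}(\Gamma)=0$ and the vanishing of scl. So there is nothing in the paper to compare your argument against.

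That said, your outline is essentially the standard proof. The easy direction $(ii)\Rightarrow(i)$ is correct as you wrote it; note only that the bound $|\bar\phi([a,b])|\le\Delta(\bar\phi)$ you implicitly use for a single commutator already requires conjugation-invariance of $\bar\phi$ (equivalently, $\bar\phi(ab)=\bar\phi(ba)$), not merely $\bar\phi(x^{-1})=-\bar\phi(x)$, but this of course follows from homogeneity. For the hard direction $(i)\Rightarrow(ii)$ your Hahn--Banach strategy is the right one, and the step you flag as delicate---turning the separating functional on the chain level into an actual homogeneous quasi-homomorphism on $\Gamma$---is indeed where the content lies. In Bavard's and Calegari's treatments this is handled by working in the bounded cochain complex: the separating functional is a bounded $2$-cocycle, and the exact sequence identifying the kernel of $H^2_b(\Gamma;\mathbb{R})\to H^2(\Gamma;\mathbb{R})$ with $\widetilde{QH}(\Gamma)$ produces the desired quasi-homomorphism. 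Your description in terms of ``extending $L$ from cycles to group elements via sections modulo $[\Gamma,\Gamma]$'' is a little informal and would need to be made precise along these lines, but there is no genuine gap in the plan.
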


The goal of this paper is to provide a new example of $\Gamma$ that satisfies the conditions in Theorem~\ref{thm:scl}. Before the presentation of our main result, we shall recall basic definitions of elementary linear groups and bounded generation. We recall that for an associative ring $B$ with unit, the \textit{elementary linear group} $EL_n (B)$ is defined as the multiplicative group generated by all elementary matrices in $M_n (B)$. Here an \textit{elementary matrix} in $M_n (B)$ is an  element whose diagonal entries are $1$ and other entries except one are all $0$. (It is also common to call $EL_n(B)$ the \textit{elementary group} over $B$ and use the symbol $E_n(B)$ for this group.) Note that $[EL_n (B), EL_n (B)]=$$EL_n(B)$ for $n\geq 3$. We say that subsets $F_1$, \ldots , $F_j$ of a group $\Gamma$ \textit{boundedly generate} $\Gamma$ if there exists a constant $M \in \mathbb{N}$ such that 
$$
\Gamma = F_{i(1)}F_{i(2)} \cdots F_{i(M)}
$$
holds, where each $i(m)$ $(1\leq m \leq M)$ is in $\{ 1, \ldots ,j \}$. 
Now we present our main result. 

\begin{thm}\label{thm:ge2}$($Main Theorem\,$)$
Let $A$ be an associative and commutative ring with unit. Let $n\geq 6$ be an integer and $\Gamma = EL_n (A)$. Suppose in addition the following two conditions hold:
\begin{itemize}
   \item[$(1)$] The elementary linear group $EL_2 (A)$ coincides with $SL_2 (A)$. 
   \item[$(2)$] The group $\Gamma$ is boundedly generated by $G=SL_2 (A)$ and the set of all single commutators in
    $\Gamma$. Here we regard $G$ as the subgroup of 
   $\Gamma$ in the left upper corner. 
\end{itemize}
Then all quasi-homomorphisms on $\Gamma$ are bounded. Or equivalently, the scl vanishes on $\Gamma$. 
\end{thm}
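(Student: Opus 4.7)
The plan is to show that every homogeneous quasi-homomorphism $\psi\colon\Gamma\to\mathbb{R}$ vanishes identically; by Theorem~\ref{thm:scl} this is equivalent to the asserted vanishing of scl. I shall use three standard facts about a homogeneous QH: conjugation-invariance ($\psi(aga^{-1})=\psi(g)$), additivity on commuting pairs ($\psi(gh)=\psi(g)+\psi(h)$ when $gh=hg$), and the defect bound $|\psi([a,b])|\leq\Delta(\psi)$ on a single commutator. Since $\psi(g^n)=n\psi(g)$, a bounded homogeneous QH is automatically zero, so the goal reduces to proving that $\psi$ is bounded on $\Gamma$.

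First, I reduce to bounding $\psi$ on $G$. Condition~$(2)$ provides an integer $M$ such that every $\gamma\in\Gamma$ admits a factorisation $\gamma=f_1\cdots f_M$ with each $f_i$ either in $G=SL_2(A)$ or a single commutator in $\Gamma$. Combining the defect inequality for $\psi$ with the commutator bound yields
\[
|\psi(\gamma)|\leq M\sup_{g\in G}|\psi(g)|+(2M-1)\Delta(\psi),
\]
so it suffices to bound $\psi|_G$ uniformly in $g\in G$.

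Next, I exploit $n\geq 6$. The three disjoint $2\times 2$ diagonal blocks supported on $\{1,2\}$, $\{3,4\}$, $\{5,6\}$ carry pairwise commuting copies $G_1,G_2,G_3$ of $G$ inside $\Gamma$, and a signed-permutation matrix $\sigma\in EL_n(A)$ cyclically conjugates them. For $g\in G$ with block copies $g_i\in G_i$, conjugation-invariance gives $\psi(g_1)=\psi(g_2)=\psi(g_3)=\psi(g)$, and commuting-additivity gives $\psi(g_1g_2g_3)=3\psi(g)$.

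The decisive step, and the \emph{main obstacle}, is to prove that the diagonal element $T:=g_1g_2g_3=\operatorname{diag}(g,g,g,I_{n-6})$ has uniformly bounded commutator length in $\Gamma$: some constant $K$, independent of $g$, with $T$ a product of at most $K$ single commutators. Granted this, $|\psi(T)|\leq K\Delta(\psi)$ forces $|\psi(g)|\leq K\Delta(\psi)/3$ uniformly in $g\in G$; combined with the reduction above, $\psi$ is then bounded on $\Gamma$ and hence, by homogeneity, vanishes identically. For the factorisation I would chain Whitehead-type identities such as $[w_{ij},g_i]=g_jg_i^{-1}$, where $w_{ij}\in EL_n(A)$ swaps the $i$-th and $j$-th 2-blocks, using condition~$(1)$ to make the whole of $SL_2(A)$ available inside $EL_n(A)$. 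The subtle point is that every commutator constructed purely from block swaps and the $g_i$ lands in the ``sum-zero'' subgroup $\{g_1^ag_2^bg_3^c:a+b+c=0\}$, on which $\psi$ already vanishes and so gives no information; producing a genuinely non-diagonal commutator representing $T$ (or a fixed power of it) is the technical heart of the argument, and this is precisely where the availability of three rather than two commuting blocks, i.e.\ the hypothesis $n\geq 6$, becomes essential.
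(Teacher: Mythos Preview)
Your reduction matches the paper's: pass to a homogeneous $\phi$, use condition~(2) to reduce to bounding $\phi|_G$, and note that a bounded homogeneous $\phi$ vanishes. The gap is precisely where you place it, and it is genuine: you have not shown that $T=\operatorname{diag}(g,g,g,I_{n-6})$ is a product of boundedly many commutators in $\Gamma$, uniformly in $g\in G$. Your own observation that block-swap commutators only reach the sum-zero subgroup $\{g_1^{a}g_2^{b}g_3^{c}:a+b+c=0\}$ is exactly the obstruction, and you supply no identity that escapes it. There is no reason to expect a short commutator expression for $T$ in general; for rings such as $A=\mathbb{C}[x]$ the commutator length on $SL_n(A)$ is unbounded, a point the paper explicitly highlights as the interesting feature of its result. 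So the step you label ``the technical heart'' is not a routine detail to be filled in but essentially the whole content of the theorem.

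The paper takes a different route. Rather than bounding the commutator length of $T$, it proves directly (Theorem~\ref{thm:vanish}, valid for \emph{every} commutative $A$ with unit) that $\phi|_{EL_2(A)}\equiv 0$, by showing $\phi(sg)=\phi(g)$ for each elementary $s\in EL_2(A)$. One sets $p=sg$, $q=g^{-1}$, $r=s^{-1}$ so that $pqr=I_2$, and uses an explicit Dennis--Vaserstein factorisation of $\operatorname{diag}(p,q,r)\in GL_3(M_2(A))$ into block-triangular factors; since $\phi$ vanishes on block-triangular elements and the diagonal blocks commute, this yields $\phi(sg)-\phi(g)=\phi(X_1YX_2)$ for explicit $X_1,X_2,Y$. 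A computation exploiting the specific upper-triangular shape of $s$ then rewrites $X_2X_1Y=Z\cdot XY$ with $Z$ lying in a subgroup $\Gamma_N$ on which $\phi$ vanishes and which $XY$ normalises, while $XY$ is conjugate to its own inverse; hence $\phi(X_1YX_2)=0$. The three $2\times 2$ blocks (whence $n\ge 6$) enter through this $GL_3(M_2(A))$ identity, not through any commutator bound for $\operatorname{diag}(g,g,g)$.
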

We mention that P. M. Cohn \cite{Coh} has defined the concept of the $GE_2$\textit{-ring} for  an associative (and not necessarily commutative) ring $B$ with unit as follows: the ring $B$ is called a $GE_2$-ring if $GE_2 (B)$ coincides with $GL_2 (B)$. Here $GL_2 (B)$ denotes the group of all invertible elements in $M_2 (B)$, and $GE_2 (B)$ denotes the subgroup of $GL_2 (B)$ generated by its elementary matrices and invertible diagonal matrices. If $B$ is commutative in addition, this definition is equivalent to condition $(1)$ in Theorem~\ref{thm:ge2}. For the proof of this equivalence, one uses the standard form on $GE_2(B)$, see \cite[p.10]{Coh}.

A well-known example of rings satisfying the conditions of Theorem~\ref{thm:ge2} (for all $n$) is a euclidean ring (we will see this in Section~\ref{sec:Thm}). Thus, we obtain the following theorem. We note that for any euclidean ring, $EL_n=SL_n$ for all $n\geq 2$. 

\begin{thm}\label{thm:euclid}
Let $R$ be a euclidean ring and $\Gamma = SL_n (R)$. Then for any $n\geq 6$, the conclusions of Theorem~\ref{thm:ge2} hold. 
\end{thm}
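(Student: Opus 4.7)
The plan is to verify the two hypotheses of Theorem~\ref{thm:ge2} for $A = R$ a euclidean ring, and then invoke that theorem. The equality $SL_n(R) = EL_n(R)$ for all $n \geq 2$ (and in particular condition $(1)$ for $n = 2$) is classical: given any matrix in $SL_n(R)$, the Euclidean algorithm applied to pairs of entries allows one to reduce the matrix to the identity through elementary row and column operations. In particular the chosen $\Gamma = SL_n(R)$ coincides with $EL_n(R)$, so it fits the setup of Theorem~\ref{thm:ge2}.

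For condition $(2)$, the key observation is that for $n \geq 3$ every elementary matrix admits a commutator presentation via the Steinberg-type identity $E_{ij}(r) = [E_{ik}(r), E_{kj}(1)]$, valid whenever $i, j, k$ are distinct. Hence every $E_{ij}(r)$ with $\{i, j\} \neq \{1, 2\}$ is a single commutator in $\Gamma$, while $E_{12}(r)$ and $E_{21}(r)$ lie in $G = SL_2(R)$. It is therefore enough to show that $\Gamma$ is a bounded product of elementary matrices and elements of $G$.

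To this end I would carry out a column-by-column reduction. For the first column $(a_1, \ldots, a_n)^T$ of $M \in SL_n(R)$, whose entries are coprime, the fact that a PID has stable range at most $2$ furnishes scalars $r_3, \ldots, r_n, s_3, \ldots, s_n \in R$ such that $a_1 + \sum_{j \geq 3} r_j a_j$ and $a_2 + \sum_{j \geq 3} s_j a_j$ are coprime. Applying the elementary matrices $E_{1j}(r_j)$ and $E_{2j}(s_j)$ for $j \geq 3$ (each a single commutator), then one element of $G$ sending the modified pair to $(1, 0)$, and finally $E_{j1}(-a_j)$ for $j \geq 3$ to clear the remaining entries, reduces the first column to $e_1$ using $O(n)$ factors. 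An analogous right-multiplication clears the first row, producing a block decomposition $I_1 \oplus N$ with $N \in SL_{n-1}(R)$. Iterating on $N$ then expresses $M$ as a product of at most $O(n^2)$ factors drawn from $G$ and the set of single commutators, which is exactly condition $(2)$.

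The main technical obstacle is bypassing the unbounded length of the naive Euclidean algorithm on two variables: this is precisely what stable range $2$ accomplishes, leveraging the extra coordinates $a_3, \ldots, a_n$ as auxiliary freedom, and it is the step where the hypothesis on the size of $n$ genuinely matters. A secondary subtlety is the recursion step, since the designated copy of $G$ sits specifically in positions $(1, 2)$; this can be addressed by conjugating by a suitable permutation matrix, itself a bounded product of elementary matrices (and hence of single commutators), to transport $G$ to the appropriate $2 \times 2$ block at each stage. Once both hypotheses are verified, Theorem~\ref{thm:ge2} applied to $A = R$ delivers the conclusion.
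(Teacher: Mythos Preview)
Your proposal is correct and reaches the same conclusion, but by a different route from the paper. For condition~$(2)$ the paper simply quotes a theorem of Newman: over a principal ideal ring, for $m\ge 3l$ every element of $SL_m(R)$ is a product of two commutators in $SL_m(R)$ and one element of the upper-left $SL_{m-l}(R)$; iterating this from $m=3$, $l=1$ down to $SL_2$ gives the bounded generation in one stroke. You instead build the factorization by hand, combining the Steinberg identity $E_{ij}(r)=[E_{ik}(r),E_{kj}(1)]$ with a stable-range-$2$ column reduction and a permutation-conjugation to carry $G$ through the recursion. Your argument is more self-contained (no external black box beyond the standard stable-range fact for PIDs) at the price of more bookkeeping; the paper's is a two-line citation. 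Both yield a bound depending only on $n$.

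One small point worth tightening in a writeup: the claim that stable range $\le 2$ produces $r_j,s_j$ with $(a_1+\sum r_j a_j,\,a_2+\sum s_j a_j)$ unimodular is true, but it is not the literal textbook statement; it follows by first choosing $b\in(a_3,\dots,a_n)$ with $(a_1,a_2,b)=R$ (possible since $(a_1,a_2)+(a_3,\dots,a_n)=R$) and then applying the stable-range condition to the triple $(a_1,a_2,b)$. With that bridge made explicit, the $O(n)$ count per column and the rest of the argument go through as you describe.
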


Theorem~\ref{thm:ge2} follows from the following Theorem~\ref{thm:vanish}, which itself is of interest. 

\begin{thm}\label{thm:vanish}
Let $A$ be an associative and commutative ring with unit and $\Gamma= EL_6(A)$. Let $G=EL_2 (A)$ be the subgroup of $\Gamma$ in the left upper corner. If $\phi$ be a $\mathrm{homogeneous}$ $\mathrm{quasi}$-$\mathrm{homomorphism}$ $($see Definition~\ref{def:homoge}$)$ on $\Gamma$, then the restriction of $\phi$ on $G$ vanishes identically. 
\end{thm}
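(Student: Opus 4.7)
The plan is to prove $\phi|_G \equiv 0$ by first showing that $\phi$ vanishes on every elementary matrix of $\Gamma$, and then leveraging the three commuting block-embeddings of $EL_2(A)$ to derive a universal bound on $|\phi|_G|$, followed by homogeneity to conclude.

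\textbf{First step.} For every homogeneous quasi-homomorphism $\phi$ on $EL_n(B)$ with $n \ge 3$ and every elementary matrix $e_{ij}(a)$, I claim $\phi(e_{ij}(a)) = 0$. The Steinberg relation $e_{ij}(a) = [e_{ik}(a), e_{kj}(1)]$ (valid for some third index $k \neq i, j$, which exists since $n \ge 3$), combined with the standard bound $|\phi([x,y])| \le \Delta(\phi)$ and the homogeneity identity $\phi(e_{ij}(na)) = n\phi(e_{ij}(a))$, yields $|n\phi(e_{ij}(a))| \le \Delta(\phi)$ for every $n \in \mathbb{N}$; hence $\phi(e_{ij}(a)) = 0$.

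\textbf{Second step.} Introduce the three block-diagonal embeddings $j_1, j_2, j_3 \colon EL_2(A) \hookrightarrow \Gamma = EL_6(A)$ at positions $(1,2)$, $(3,4)$, $(5,6)$. Their images pairwise commute (being supported on disjoint coordinate pairs), and any two are $\Gamma$-conjugate via a signed block-permutation, which lies in $EL_6(A)$ for any commutative $A$ since $-I_2 \in EL_2(A)$. Conjugation-invariance of homogeneous quasi-homomorphisms gives $\phi(j_k(g)) = \phi(g)$ for every $k \in \{1, 2, 3\}$, and the standard lemma that homogeneous qh's are exactly additive on commuting elements (proved by applying the defect bound to $(xy)^n = x^n y^n$ and dividing by $n$) yields
\[
\phi(j_1(g) j_2(g) j_3(g)) = 3\phi(g).
\]

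\textbf{Third step.} The crucial technical step is to express $\mathrm{diag}(g,g,g) = j_1(g) j_2(g) j_3(g)$ in $\Gamma$ as a product of a \emph{bounded} number of commutators and elementary matrices, where the bound must remain uniform when $g$ is replaced by any power $g^n$. I would use the Whitehead-type identity $j_k(g) j_l(g)^{-1} = [j_k(g), \sigma_{kl}]$ (a single commutator), with $\sigma_{kl} \in EL_6(A)$ the block-swap, cascaded across the three blocks and combined with Steinberg identities to absorb the elementary-matrix factors that appear.

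\textbf{Fourth step.} Once such a uniformly bounded factorization is obtained, the first step gives $|\phi(\mathrm{diag}(g,g,g))| \le C$ for some constant $C$ independent of $g$. Matching with $\phi(\mathrm{diag}(g,g,g)) = 3\phi(g)$ from the second step yields $|3\phi(g)| \le C$. Substituting $g \to g^n$ (which remains in $G$) gives $|3 n \phi(g)| = |3\phi(g^n)| \le C$ for every $n \in \mathbb{N}$, forcing $\phi(g) = 0$.

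The main obstacle is the third step: naive cascades of Whitehead identities reintroduce factors like $j_k(g^m)$ with growing $m$, in which case both sides of the resulting equation match tautologically and yield no new information. The delicate task is to engineer a decomposition in which all $j_k$-factors are genuinely absorbed into uniformly bounded commutator expressions, plausibly by exploiting the full three-block structure available in $EL_6$ together with the flexibility of Steinberg relations.
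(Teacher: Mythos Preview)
Your proposal has a genuine gap, and you identify it yourself: the third step is not actually carried out. You want to write $\mathrm{diag}(g,g,g)$ as a product of boundedly many commutators (and elementary matrices), uniformly in $g$, but as you note, every Whitehead-type cascade produces a residual factor $j_k(g^m)$ whose exponent grows with the cascade, and the resulting identity is tautological. This is not an accident but a structural obstruction: products of Whitehead commutators $\mathrm{diag}(h,h^{-1},I_2)$ always have block-product equal to $I_2$, whereas $\mathrm{diag}(g,g,g)$ has block-product $g^3$. So in the block-diagonal world the residue is unavoidable, and there is no reason to expect that passing to non-block commutators saves you---indeed for rings like $\mathbb{C}[x]$ the commutator length on $SL_6$ is unbounded, which is precisely what makes the theorem nontrivial. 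Without a concrete mechanism here, you do not have a proof.

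The paper sidesteps this obstruction by working not with $\mathrm{diag}(g,g,g)$ but with $D=\mathrm{diag}(sg,\,g^{-1},\,s^{-1})$, where $s$ is a single elementary matrix in $EL_2(A)$. Now the block-product \emph{is} $I_2$, so the Dennis--Vaserstein lemma writes $D$ as a product in $(L_3B)(U_3B)(L_3B)(U_3B)$ with $B=M_2(A)$, and $\phi(D)=\phi(sg)-\phi(g)$ reduces the theorem to showing that $\phi$ is invariant under left multiplication by an elementary $s$. The delicate work is then a matrix computation: after a conjugation the Dennis--Vaserstein product becomes $ZXY$, where $Z$ lies in a specific unipotent subgroup $\Gamma_N$ normalized by $X$ and $Y$ (so $\phi(ZXY)=\phi(XY)$), and $XY$ is explicitly conjugate to its own inverse, forcing $\phi(XY)=0$. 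This calculation crucially uses that $s$ is upper (or lower) triangular, which makes the relevant $2\times 2$ entries land in a strictly upper-triangular subring. None of this is visible from the commuting-blocks-plus-Whitehead viewpoint you set up.
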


A particularly interesting case of Theorem~\ref{thm:euclid} is where $R=F[x]$ for a field $F$ of infinite transcendence degree over its prime field. (For instance, $F=$$\mathbb{R}$ or $\mathbb{C}$.)  For $n \geq 6$ and $F$ as above, the group $SL_n (F[x])$ satisfies the following property: although the commutator length is unbounded, the stable commutator length vanishes identically. We refer to \cite{DV} and \cite{Kal} for the proof of the unboundedness of the commutator length in this case. We note that A. Muranov \cite{Mur} has shown that there is a simple and finitely generated group with the property above, and that P.-E. Caprace and K. Fujiwara \cite{CF} have investigated the non-vanishing of $\widetilde{QH}$ (and hence also non-vanishing of scl) for Kac--Moody groups. Theorem~\ref{thm:euclid} particularly answers, for $n \geq 6$, the question of M. Ab\'{e}rt and N. Monod \cite[Problem Q]{Mon2} whether $SL_n (F[x])$ for the field $F$ as in above permits a non-perturbed quasi-homomorphism.

We end this introduction by noting the following remark: M. Gromov observed in \cite{Gr} that for a discrete group $\Gamma$, the vanishing of $\widetilde{QH}$ is also equivalent to the injectivity of the natural map 
$H^2_b (\Gamma ; \mathbb{R}) \to H^2 (\Gamma ; \mathbb{R})$, 
where the left-hand side is the second \textit{bounded cohomology}. 
For details of bounded cohomology, we refer to \cite{Gr}, \cite{Mon1} and \cite{Mon2}.

\section{\textbf{Preliminaries on homogeneous quasi-homomorphisms}}\label{sec:pre}
\begin{defn}\label{def:homoge}
A quasi-homomorphism $\phi$ on $\Gamma$ is said to be \textit{homogeneous} if for any $g\in \Gamma$ and $m\in \mathbb{Z}$, $\phi (g^m) =m \cdot \phi (g)$ holds. 
\end{defn}

We need the following two basic facts on quasi-homomorphisms. See \cite[Subsection 2.2]{Cal} for a comprehensive treatment. 

\begin{prop}\label{prop:hom}
Let $\psi$ be a quasi-homomorphism on $\Gamma$. Then there exists a $\mathrm{homogeneous}$ quasi-homomorphism $\overline{\psi}$ such that the following holds: $\psi$ is perturbed if and only if $\overline{\psi}$ is a homomorphism. 
\end{prop}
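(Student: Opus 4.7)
The plan is to construct $\overline{\psi}$ as the \emph{homogenization} of $\psi$, namely
\[
\overline{\psi}(g) := \lim_{n \to \infty} \frac{\psi(g^n)}{n}.
\]
The first task is to verify that this limit exists for every $g \in \Gamma$. The defining inequality of a quasi-homomorphism, applied iteratively, gives $|\psi(g^{m+n}) - \psi(g^m) - \psi(g^n)| \leq \Delta(\psi)$. I would use this in the standard Fekete-style way: by a straightforward induction one obtains $|\psi(g^{2^k n}) - 2^k \psi(g^n)| \leq (2^k-1)\Delta(\psi)$, so dividing by $2^k n$ shows that the sequence $(\psi(g^n)/n)_n$ along powers of $2$ is Cauchy, and a comparison argument extends this to the full sequence. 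The same estimate, taken in the limit, yields the key bound
\[
|\psi(g) - \overline{\psi}(g)| \leq \Delta(\psi) \quad \text{for all } g \in \Gamma.
\]

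Next I would check that $\overline{\psi}$ is itself a quasi-homomorphism and that it is homogeneous. For the quasi-homomorphism property, the inequality $|\psi((gh)^n) - \psi(g^n) - \psi(h^n)|$ can be controlled up to an error linear in $n$ plus a term of size $O(\Delta(\psi))$ coming from reordering factors inside $(gh)^n$ (one inserts conjugates and uses $\Delta(\psi)$ to pay for each swap, then notes that $\psi$ evaluated on a conjugate $k g^n k^{-1}$ differs from $\psi(g^n)$ by $O(\Delta(\psi))$). After dividing by $n$ and passing to the limit the $O(\Delta(\psi))$ errors become the defect of $\overline{\psi}$, giving $\Delta(\overline{\psi}) \leq C\,\Delta(\psi)$ for some universal constant $C$. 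Homogeneity, $\overline{\psi}(g^m) = m \overline{\psi}(g)$ for all $m \in \mathbb{Z}$, then follows immediately from the definition by reindexing (and for negative $m$ from the observation that $|\psi(g^{-n}) + \psi(g^n)| \leq \Delta(\psi) + |\psi(e_\Gamma)|$, which is bounded).

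Finally I would establish the equivalence. Suppose first that $\psi = \psi_0 + c$ with $\psi_0$ a homomorphism and $c$ bounded. Then $\psi(g^n)/n = \psi_0(g) + c(g^n)/n \to \psi_0(g)$, so $\overline{\psi} = \psi_0$ is a homomorphism. Conversely, suppose $\overline{\psi}$ is a homomorphism. By the bound $|\psi(g) - \overline{\psi}(g)| \leq \Delta(\psi)$ established above, the function $c := \psi - \overline{\psi}$ has bounded image, so the decomposition $\psi = \overline{\psi} + c$ exhibits $\psi$ as perturbed. The main obstacle, and the only step requiring care, is the existence of the limit and the passage-to-the-limit argument showing that $\overline{\psi}$ inherits the quasi-homomorphism property; the rest is bookkeeping with the explicit defect bounds.
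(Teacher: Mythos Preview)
Your construction is the standard homogenization, and the argument is correct. Note, however, that the paper does not actually prove this proposition: it is stated as a basic fact with a pointer to \cite{Cal}, so there is no in-paper proof to compare against. What you have written is essentially the argument found in that reference.

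One simplification worth noting: your route to the quasi-homomorphism property of $\overline{\psi}$, via controlling $|\psi((gh)^n)-\psi(g^n)-\psi(h^n)|$ through reordering and conjugates, is more work than needed (and your sketch there is the vaguest part of the write-up). Once you have established the bound $|\psi-\overline{\psi}|\leq \Delta(\psi)$, the quasi-homomorphism property is immediate:
\[
|\overline{\psi}(gh)-\overline{\psi}(g)-\overline{\psi}(h)| \;\leq\; |\psi(gh)-\psi(g)-\psi(h)| + 3\Delta(\psi) \;\leq\; 4\Delta(\psi).
\]
This sidesteps the conjugation bookkeeping entirely and makes the constant explicit.
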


\begin{lem}\label{lem:hom}
Let $\phi$ be a homogeneous quasi-homomorphism on $\Gamma$. Then the following hold: 
\begin{itemize}
   \item[$(1)$]  For any commuting pair $g,h \in \Gamma$, $\phi (gh)= \phi (g)+ \phi (h)$.
   \item[$(2)$]  The map $\phi$ is constant on each conjugacy class, namely, 
   for any $g\in \Gamma$ and any $t \in \Gamma$, $\phi (tgt^{-1})=\phi(g)$ holds.
\end{itemize}
\end{lem}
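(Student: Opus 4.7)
The plan is to prove both parts by the standard ``amplification'' trick for homogeneous quasi-homomorphisms: apply the defect inequality to a high power $n$ of the element in question, use homogeneity to pull out a factor of $n$ from each term, and then divide by $n$ and send $n \to \infty$ so that the constant $\Delta(\phi)$ error disappears. I first record two trivial consequences of homogeneity that I will use freely: setting $m=0$ gives $\phi(e_{\Gamma}) = 0$, and setting $m = -1$ gives $\phi(g^{-1}) = -\phi(g)$ for all $g \in \Gamma$.

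For part $(1)$, suppose $g, h \in \Gamma$ commute. Then for every $n \in \mathbb{N}$ the identity $(gh)^n = g^n h^n$ holds. The defect bound applied to the product $g^n \cdot h^n$ gives
$$
\bigl| \phi(g^n h^n) - \phi(g^n) - \phi(h^n) \bigr| \le \Delta(\phi),
$$
and homogeneity converts this into
$$
\bigl| n\phi(gh) - n\phi(g) - n\phi(h) \bigr| \le \Delta(\phi).
$$
Dividing by $n$ and letting $n \to \infty$ forces $\phi(gh) = \phi(g) + \phi(h)$.

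For part $(2)$, fix $g, t \in \Gamma$ and observe that $(t g t^{-1})^n = t g^n t^{-1}$ for all $n$. Applying the defect inequality twice, once to $t \cdot (g^n t^{-1})$ and once to $g^n \cdot t^{-1}$, and using $\phi(t^{-1}) = -\phi(t)$, I obtain
$$
\bigl| \phi(t g^n t^{-1}) - \phi(g^n) \bigr| \le 2\,\Delta(\phi).
$$
Homogeneity then yields $|n\phi(t g t^{-1}) - n\phi(g)| \le 2\,\Delta(\phi)$, and dividing by $n$ and letting $n \to \infty$ gives $\phi(t g t^{-1}) = \phi(g)$.

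I do not anticipate a genuine obstacle here; both arguments are entirely formal once one notices that taking powers multiplies the values $\phi(\cdot)$ linearly while leaving the bounded defect error unchanged, so only bookkeeping is required.
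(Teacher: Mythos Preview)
Your proof is correct and follows essentially the same amplification argument as the paper's proof; the paper in fact only writes out part~$(2)$, using exactly the identity $(tgt^{-1})^{m}=tg^{m}t^{-1}$ and the bound $m\cdot|\phi(tgt^{-1})-\phi(g)|\le 2\Delta$ that you derive.
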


\begin{proof}
We only present a proof of $(2)$. Let $\Delta $ be the defect of $\phi$. By noticing $(tg t^{-1})^{m}=tg^m t^{-1}$,  one observes that for any $m \in \mathbb{N}$, $m \cdot | \phi (tgt^{-1})-\phi(g) |$$\leq 2\Delta $ holds. Hence, $\phi (tgt^{-1})=\phi(g)$.
\end{proof}

The following lemma is an immediate result from Lemma~\ref{lem:hom}. 
\begin{lem}\label{lem:vanish}
Let $\phi$ be a homogeneous quasi-homomorphism on $\Gamma$. If $g\in \Gamma$ is conjugate to its inverse, then $\phi (g)=0$.
\end{lem}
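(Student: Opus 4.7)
The plan is to combine the two parts of Lemma~\ref{lem:hom} (only one of which is explicitly proved in the excerpt) with the homogeneity property to force $\phi(g)$ to equal its own negative.

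First, I would observe that homogeneity immediately gives $\phi(g^{-1}) = -\phi(g)$, by applying the defining identity $\phi(g^m) = m\cdot\phi(g)$ with $m=-1$. Second, since by hypothesis $g$ is conjugate to $g^{-1}$, there exists $t \in \Gamma$ with $tgt^{-1} = g^{-1}$. Then part $(2)$ of Lemma~\ref{lem:hom}, which states that $\phi$ is a class function, yields
\[
\phi(g) \;=\; \phi(tgt^{-1}) \;=\; \phi(g^{-1}) \;=\; -\phi(g).
\]
Therefore $2\phi(g)=0$, and dividing by $2$ in $\mathbb{R}$ gives $\phi(g)=0$.

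There is no real obstacle here; both ingredients (conjugation-invariance and the identity $\phi(g^{-1})=-\phi(g)$) are already established in the preceding lemma and definition, and the lemma follows in a single line. The only thing to double-check is that homogeneity is being used in the full $\mathbb{Z}$-linear sense given in Definition~\ref{def:homoge}, so that $m=-1$ is legitimate; since that definition ranges $m$ over all of $\mathbb{Z}$, this is immediate.
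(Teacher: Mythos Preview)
Your proof is correct and is exactly the argument the paper has in mind: the paper simply states that the lemma is an immediate consequence of Lemma~\ref{lem:hom}, and your use of conjugation-invariance together with the homogeneity identity $\phi(g^{-1})=-\phi(g)$ is precisely that immediate deduction.
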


We proceed to preliminaries on homogeneous quasi-homomorphism on elementary linear groups. Let $B$ be an associative ring with unit and $n\geq 2$. We call an element $g \in EL_n (B)$ a \textit{unit upper }(respectively \textit{lower}) \textit{triangular matrix} if all diagonal entries are $1$ and all of the entries below (respectively above) the diagonals are $0$. We define $U_n B$ (respectively $L_n B$) as the group of all unit upper (respectively lower) triangular matrices of degree $n$. 

\begin{lem}\label{lem:triang}
Let $\phi$ be a homogeneous quasi-homomorphism on $\Gamma =EL_n (B)$. If $n \geq 3$, then the following hold: 
\begin{itemize}
 \item[$(1)$]  For any elementary matrix $s\in \Gamma$, $\phi (s)=0$.
 \item[$(2)$]  For any $h \in (U_n B) \cup (L_n B)$, 
 $\phi (h)=0$.
\end{itemize}
\end{lem}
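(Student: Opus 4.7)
For part (1), I would use the Steinberg commutator relation, which provides the needed third index because $n \geq 3$, together with the two structural properties of homogeneous quasi-homomorphisms already isolated. Given an elementary matrix $e_{pq}(b)$ with $p \neq q$, pick $j \notin \{p, q\}$ and rewrite the Steinberg identity $[e_{pj}(1), e_{jq}(b)] = e_{pq}(b)$ as
\[
e_{pj}(1) \, e_{jq}(b) \, e_{pj}(-1) = e_{pq}(b) \, e_{jq}(b).
\]
Applying $\phi$, the left-hand side equals $\phi(e_{jq}(b))$ by conjugation-invariance (Lemma~\ref{lem:hom}(2)), and the right-hand side equals $\phi(e_{pq}(b)) + \phi(e_{jq}(b))$ by additivity on commuting pairs (Lemma~\ref{lem:hom}(1)), since $e_{pq}(b)$ and $e_{jq}(b)$ share their second index and therefore commute. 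Cancellation yields $\phi(e_{pq}(b)) = 0$.

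For part (2), I would proceed by induction on the size $m$ of the unipotent block, showing that $\phi$ vanishes on every copy of $U_m B$ embedded in $\Gamma$. The base case $m = 2$ is immediate from part (1), since such a block is a single one-parameter group of elementary matrices. For the inductive step, let $V \subseteq U_m B$ be the subgroup generated by the first-row elementary matrices $e_{1k}(c)$ for $k = 2, \ldots, m$; by Steinberg these commute pairwise, so $V$ is abelian. A short computation shows $V$ is normal in $U_m B$: conjugating a matrix whose only nontrivial entries lie in the first row by any $h \in U_m B$ again produces such a matrix, because $h$ fixes $e_1$ and rows of $h^{-1}$ are again those of a unipotent upper-triangular matrix. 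The lower-right copy $W \cong U_{m-1} B$ serves as a complement, giving a semidirect product $U_m B = V \rtimes W$. By Lemma~\ref{lem:hom}(1), $\phi$ is a homomorphism on the abelian $V$ and vanishes on its generators by part (1), so $\phi|_V \equiv 0$; the inductive hypothesis provides $\phi|_W \equiv 0$.

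Writing $h = v g$ with $v \in V$ and $g \in W$, normality of $V$ lets me rearrange $(vg)^k = v_k \, g^k$ where $v_k = v \cdot (g v g^{-1}) \cdots (g^{k-1} v g^{-(k-1)}) \in V$. The quasi-homomorphism inequality then gives
\[
|\phi(h^k)| = |\phi(v_k g^k) - \phi(v_k) - \phi(g^k)| \leq \Delta(\phi),
\]
since $\phi(v_k) = 0$ and $\phi(g^k) = k \phi(g) = 0$. Homogeneity converts the left-hand side into $|k \, \phi(h)|$, so $\phi(h) = 0$. The case of $L_n B$ is handled symmetrically via first-column elementary matrices. The main non-trivial point is precisely this homogeneity trick in part (2): because $v$ and $g$ do not commute, one cannot split $\phi(vg)$ additively in a single step, but passing to the $k$-th power absorbs the obstruction into an $O(1)$ error, which the linear growth $\phi(h^k) = k \phi(h)$ then forces to zero.
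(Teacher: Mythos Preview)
Your argument is correct, but both parts take a different route from the paper's. For (1), the paper simply notes that when $n\geq 3$ every elementary matrix $e_{pq}(b)$ is conjugate in $EL_n(B)$ to its own inverse (conjugate by the diagonal matrix with $-1$ in slots $p$ and some $k\notin\{p,q\}$ and $1$ elsewhere) and then applies Lemma~\ref{lem:vanish}. Your Steinberg-relation computation is a clean alternative that avoids having to exhibit such a conjugator and uses only Lemma~\ref{lem:hom}. For (2), however, the paper's proof is considerably shorter than your semidirect-product induction: every $h\in U_nB$ is a product of at most $\binom{n}{2}$ elementary matrices, so part (1) and the defect bound give a uniform estimate $|\phi(h)|\leq\bigl(\binom{n}{2}-1\bigr)\Delta(\phi)$ on all of $U_nB$; since $U_nB$ is closed under taking powers, homogeneity immediately forces $\phi\equiv 0$ there. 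Your inductive decomposition reproduces this conclusion but with more machinery; the one-line principle ``bounded on a power-closed subset $\Rightarrow$ identically zero'' is worth isolating, as the paper reuses it later (e.g.\ in Lemma~\ref{lem:grpnilp}(3)).
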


\begin{proof}
$(1)$ follows from Lemma~\ref{lem:vanish} because for $n\geq 3$, an elementary matrix is conjugate to its inverse. Hence, $\phi$ is bounded on $U_n B$ and $L_n B$. From the homogeneity of $\phi$, one obtains $(2)$.
\end{proof}
 
The following observation plays an important role in this paper. 
\begin{lem}\label{lem:gene}
Let $\Gamma$ be a group and $H< \Gamma $ be  a subgroup. Let  $\phi$ be a homogeneous quasi-homomorphism on $\Gamma$. Suppose  $\phi$ vanishes on $H$. Then for  any $h\in H$ and any $g \in N_{\Gamma}(H)$, $\phi (hg)=$$\phi (gh)=$$\phi (g)$ holds. Here $N_{\Gamma}(H)$ means the normalizer of $H$ in $\Gamma$. 
\end{lem}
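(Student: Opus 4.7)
The plan is to combine the defining bound of the quasi-homomorphism with the homogeneity of $\phi$, exactly as in the proof of Lemma~\ref{lem:hom}$(2)$. The difficulty in comparing $\phi(gh)$ with $\phi(g)$ directly from the defect inequality is that this gives only $|\phi(gh) - \phi(g) - \phi(h)| \leq \Delta(\phi)$, which (even using $\phi(h)=0$) yields a bounded difference, not equality. To promote ``bounded'' to ``equal,'' I would pass to $n$-th powers and divide by $n$.

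First I would show by induction on $m\geq 1$ that
\[
(gh)^m = g^m \cdot h_m \quad \text{for some } h_m \in H.
\]
The base case is trivial. For the inductive step, $(gh)^{m+1} = g^m h_m \cdot g h = g^{m+1} (g^{-1} h_m g) h$; since $g \in N_{\Gamma}(H)$ implies $g^{-1} h_m g \in H$, the product $(g^{-1}h_m g) h$ lies in $H$, completing the induction. Similarly, setting $\tilde h_m := (g^{m-1} h g^{-(m-1)}) \cdots (g h g^{-1}) \cdot h \in H$, one gets $(hg)^m = \tilde h_m \cdot g^m$.

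Now I would apply the quasi-homomorphism inequality to the decomposition $(gh)^m = g^m h_m$, use $\phi(h_m)=0$ (since $h_m \in H$), and use homogeneity to conclude
\[
m \, |\phi(gh) - \phi(g)| \;=\; |\phi((gh)^m) - \phi(g^m)| \;\leq\; \Delta(\phi).
\]
Since $m$ is arbitrary, this forces $\phi(gh)=\phi(g)$. The same argument applied to $(hg)^m = \tilde h_m \cdot g^m$ gives $\phi(hg)=\phi(g)$.

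There is no serious obstacle: the only point that requires attention is using the normalizer property at each step of the induction (so that the ``$H$-part'' that accumulates stays inside $H$, and hence is annihilated by $\phi$). This is exactly the reason the hypothesis $g \in N_{\Gamma}(H)$ (rather than merely $g \in \Gamma$) is needed.
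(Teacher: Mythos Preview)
Your proof is correct and follows essentially the same approach as the paper: the paper also writes $(hg)^m = h'g^m$ for some $h'\in H$ (using $gHg^{-1}\subset H$ repeatedly) and then deduces $m\,|\phi(hg)-\phi(g)|\le \Delta$. The only difference is cosmetic---the paper treats just the $hg$ case and leaves the $gh$ case implicit, while you spell out both inductions explicitly.
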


\begin{proof}
We will only show $\phi (hg)=\phi (g)$. Let $\Delta$ be the defect of $\phi$. 
By employing the condition $gHg^{-1}<H$ repeatedly, one has that for any $m \in \mathbb{N}$, there exists an element $h' \in H$ such that $(hg)^m =h' g^m $. 
Hence, one obtains 
$
m \cdot |  \phi (hg) - \phi (g) | \leq \Delta.
$ This ensures the conclusion. 
\end{proof}

\section{\textbf{Proof of Theorem~\ref{thm:vanish}}}\label{sec:vanish}
Let $\phi$ be any homogeneous quasi-homomorphism on $\Gamma$. Because the subgroup $G=EL_2 (A)$ is generated by 
elementary matrices of degree $2$ and $\phi$ vanishes on such matrices (Lemma~\ref{lem:triang}), it suffices to show the following claim for the proof of Theorem~\ref{thm:vanish}:

\bigskip
\textbf{Claim.}  \textit{For any }$g\in G$ \textit{and any} $s \in U_2 A(\subset G)$\textit{,} 
\textit{the equality} $\phi (sg)=\phi (g)$ \textit{holds}. \textit{The same thing holds for }$s\in L_2 A (\subset G)$. 
\bigskip

To prove Claim, first we utilize a result of R. K. Dennis and L. N. Vaserstein \cite[Lemma 18]{DV} (for $k=3$).

\begin{lem}\label{lem:elemgen}$($\cite{DV}$)$
Let $B$ be an associative ring with unit and  $p , q ,r \in GL_1 (B)$ such that $pqr=1$. Let $D$ be the diagonal matrix in $GL_3 (B)$ with the diagonal part $p , q $, and $r$. Then $D \in (L_3 B)( U_3 B)( L_3 B)( U_3 B)$. Here $GL_n$ means the group of all invertible matrices in $M_n$. 
\end{lem}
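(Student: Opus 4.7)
The plan is to construct the factorization explicitly by an ansatz-based parameter search, rather than bootstrapping from the analogous $SL_2$ fact (which, combined with embedding two $2 \times 2$-blocks, would give six factors rather than four). A decomposition $D = L_1 U_1 L_2 U_2$ with $L_i \in L_3 B$ and $U_i \in U_3 B$ is equivalent to finding $L_1$ and $U_2$ such that the matrix $M := L_1^{-1} D U_2^{-1}$ lies in $(U_3 B)(L_3 B)$; any factorization $M = U_1 L_2$ then yields the required decomposition. Membership in $(U_3 B)(L_3 B)$ is cut out by explicit shape conditions on the entries of $M$, the most restrictive being $M_{33} = 1$, with the remaining entries determined by the last row, last column, and one further coupling in the top-left $2\times 2$ block.

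To make the system tractable and, crucially, to avoid hidden commutativity requirements on $p, q, r$, I would take the asymmetric ansatz
\[
L_1 = \begin{pmatrix} 1 & 0 & 0 \\ 0 & 1 & 0 \\ c_2 & c_3 & 1 \end{pmatrix}, \qquad U_2 = \begin{pmatrix} 1 & 0 & d_2 \\ 0 & 1 & d_3 \\ 0 & 0 & 1 \end{pmatrix},
\]
trivial outside the last row of $L_1$ and the last column of $U_2$. A direct matrix calculation writes $M$ entry-by-entry in terms of these four parameters, and the shape conditions yield a short system whose solvability rests on the identity $r = q^{-1} p^{-1}$ (which is equivalent to $pqr = 1$ for $p, q, r \in GL_1(B)$). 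An explicit working choice is $c_2 = q^{-1},\ c_3 = 1,\ d_2 = p^{-1} - p^{-2},\ d_3 = q^{-1} - q^{-2}$. The matrices $U_1$ and $L_2$ are then read off from the remaining entries of $M$.

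The main obstacle is finding the right ansatz. A more symmetric choice, in which $L_1$ carries a nonzero $(2,1)$-entry and $U_2$ a nonzero $(1,2)$-entry, leads to derived relations of the form $qr = rq$ or analogous commutation conditions among $p, q, r$, which will fail in a general non-commutative ring $B$. The asymmetric ansatz above circumvents this because each determining equation has the form $xy = (\text{prescribed element})$ with a \emph{single} product on the left, solvable in any associative ring by setting one of the two factors equal to $1$. Once the ansatz is fixed, the proof is completed by verifying that $L_1 U_1 L_2 U_2 = D$.
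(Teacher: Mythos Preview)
Your argument is correct: with $c_2=q^{-1}$, $c_3=1$, $d_2=p^{-1}-p^{-2}$, $d_3=q^{-1}-q^{-2}$ one checks directly that $M:=L_1^{-1}DU_2^{-1}$ satisfies $M_{33}=1$, $M_{22}=1+M_{23}M_{32}$ and the analogous $(1,1)$-relation, using only $r=q^{-1}p^{-1}$ and no commutativity among $p,q,r$; the factors $U_1,L_2$ are then uniquely determined from the remaining entries of $M$, and $L_1U_1L_2U_2=D$ follows. (Strictly speaking there are \emph{two} residual constraints in the upper $2\times2$ block, not one, but both are satisfied by your choice.) The paper does not prove the lemma but quotes from Dennis--Vaserstein a different explicit identity,
\[
\begin{pmatrix} 1 & 0 & 0 \\ p^{-1} & 1 & 0 \\ 0 & q^{-1} & 1 \end{pmatrix} D \;=\; X_1\, Y\, X_2,\qquad Y=\begin{pmatrix} 1 & 0 & 0 \\ 1 & 1 & 0 \\ 0 & 1 & 1 \end{pmatrix},
\]
with $X_1,X_2\in U_3B$ given by closed formulas in $p,q$. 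The key structural difference is that in the paper's factorization the inner lower-triangular factor $Y$ is a \emph{constant} matrix independent of $p,q,r$; this is precisely what the subsequent argument exploits (the fixed conjugation $TYT^{-1}=Y^{-1}$ of Lemma~\ref{lem:inverse}) to show that the relevant product is conjugate to its inverse. Your ansatz, with $L_1,U_2$ supported in a single row and column so that every equation involves only one product, is a clean stand-alone route to the lemma as stated; but if one wished to plug the decomposition into the rest of Section~\ref{sec:vanish}, the Dennis--Vaserstein form with constant $Y$ is the one that makes the later steps go through.
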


Let $g$ and $s$ be as in Claim. We set $B= M_2 (A)$ in the lemma above and set 
$p= sg $,\, $q = g^{-1}$ and $r =s^{-1}$. We need the following explicit form: 

\begin{align*} 
 \left(
\begin{array}{ccc}
I_2 & 0 & 0 \\
p^{-1}  & I_2 &  0 \\
0  & q^{-1}  & I_2 
\end{array}
\right) &
\left(
\begin{array}{ccc}
p & 0 & 0 \\
0  & q &  0 \\
0  & 0  & r 
\end{array}
\right) 
= X_1 Y X_2, \ \mathrm{where\ } 
X_1= \left(
\begin{array}{ccc}
I_2 & I_2-p & 0 \\
0  & I_2 &  I_2-pq \\
0  & 0  & I_2 
\end{array}
\right)^{-1}, \\
 Y & =\left(
\begin{array}{ccc}
I_2 & 0 & 0 \\
I_2  & I_2 &  0 \\
0  & I_2  & I_2 
\end{array}
\right) , \ \mathrm{and\ }
X_2=\left(
\begin{array}{ccc}
I_2 & (I_2 -p)q & 0 \\
0  & I_2 &  (I_2 -pq)r \\
0  & 0  & I_2 
\end{array}
\right) .
\end{align*}

By applying Lemma~\ref{lem:hom} and Lemma~\ref{lem:gene} (for $H=L_3 B$), we conclude that the  evaluation of the left-hand side of the first equality in above by $\phi$ is equal to 
$$
\phi (p) + \phi (q) + \phi (r) = \phi (sg) - \phi (g) -\phi(s) = \phi (sg) - \phi (g).
$$
(Here we see $p, q, r, sg,g,$ and $s$ as elements in $G$. It has no problem because each of three diagonal $EL_2 (A)$ parts in $\Gamma$ can be conjugated to each other by permutation matrices.) 
Hence, in order to show Claim, it is enough to show that $\phi (X_1 Y X_2) =0$ for any $g$ and $s$ as in Claim.

\begin{proof}(\textit{Claim}\,)
We may assume that $s\in U_2A$ without loss of generality. Note that $X_1 Y X_2$ is conjugate to $X_2 X_1 Y$. Computation shows that $X_2 X_1 Y= ZXY$, where 
$$
X= \left(
\begin{array}{ccc}
I_2 & -(p -I_2)(q-I_2) & 0 \\
0  & I_2 &  0 \\
0  & 0  & I_2 
\end{array}
\right)  \mathrm{and\ }
 Z  =\left(
\begin{array}{ccc}
I_2 & 0 & -(p- I_2)(q- I_2)(pq-I_2) \\
0  & I_2 &  0 \\
0  & 0  & I_2 
\end{array}
\right) .
$$
Indeed, the key observation here is that if one regards $X_2 X_1$ as an element in $GL_3 (M_2(A))$, then the $(2,3)$-th entry of $X_2 X_1$ is $0(=0_2)$. (This is because $s+s^{-1} = 2I_2$.) We set $x=$$ -(p -I_2)(q-I_2)=$$-(sg-I_2)(g^{-1}-I_2)  $ and $z=$$-(p- I_2)(q- I_2)(pq-I_2)$$= -(sg-I_2)(g^{-1}-I_2)(s- I_2)$ as elements in $M_2 (A)$. By definition, $g$ and $s$ can be written as 
$$
g= \left(
\begin{array}{cc}
a & b  \\
c & d
\end{array}
\right)
\ \ (a,b,c,d \in A)\ \  \mathrm{and} \ \   
s= \left(
\begin{array}{cc}
1 & f  \\
0 & 1
\end{array}
\right)
\ \ (f \in A).
$$
By substituting these matrix forms of $g$ and $s$ for the expressions of $x$ and $z$, we continue calculations as follows: 
\begin{align*}
x&= -\left(
\begin{array}{cc}
a+fc -1 & b+fd \\
c  & d-1  
\end{array}
\right)
\left(
\begin{array}{cc}
d -1 & -b \\
-c  & a-1  
\end{array}
\right)
= 
\left(
\begin{array}{cc}
* & * \\
0  & *  
\end{array}
\right), \\
z&= 
\left(
\begin{array}{cc}
* & * \\
0  & *  
\end{array}
\right)
\left(
\begin{array}{cc}
0 & f \\
0  & 0  
\end{array}
\right)
=
\left(
\begin{array}{cc}
0 & * \\
0  & 0  
\end{array}
\right)
. 
\end{align*}
Here each $*$ respectively represents  a certain element in $A$.

Next we define a (non-unital) subring $N$ of $M_2 (A)$ by 
$$
N= \left\{ \left(
\begin{array}{cc}
0 & l \\
0  & 0  
\end{array}
\right) : 
l\in A \right\}. 
$$
Obviously $z \in N$ and the following  holds.
\begin{lem}\label{lem:Nilp}
In the setting above, the following hold: 
\begin{itemize}
 \item[$(1)$]  For any $u,v \in N$, $uv =0$.
 \item[$(2)$]  For any $u \in N$, $xu \in N$ and $ux \in N$. 
\end{itemize}
\end{lem}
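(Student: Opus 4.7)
The lemma reduces to a direct matrix computation once we record the two structural facts established just before the statement, namely that every element of $N$ has the shape $\begin{pmatrix} 0 & * \\ 0 & 0 \end{pmatrix}$ and that $x \in M_2(A)$ is of the upper triangular form $\begin{pmatrix} * & * \\ 0 & * \end{pmatrix}$ (as verified in the preceding explicit calculation). So I would not invoke anything deeper; the plan is simply to multiply.

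For part $(1)$, I would take arbitrary $u = \begin{pmatrix} 0 & l_1 \\ 0 & 0 \end{pmatrix}$ and $v = \begin{pmatrix} 0 & l_2 \\ 0 & 0 \end{pmatrix}$ in $N$ and compute $uv$ entry by entry; each of the four entries of $uv$ involves a factor from the zero column of $u$ or the zero row of $v$, so $uv = 0$. This is exactly the standard observation that the strictly upper triangular nilpotent matrices of rank one square to zero.

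For part $(2)$, writing $x = \begin{pmatrix} \alpha & \beta \\ 0 & \delta \end{pmatrix}$ with $\alpha, \beta, \delta \in A$ and $u = \begin{pmatrix} 0 & l \\ 0 & 0 \end{pmatrix}$, the products
\begin{equation*}
xu = \begin{pmatrix} 0 & \alpha l \\ 0 & 0 \end{pmatrix}, \qquad ux = \begin{pmatrix} 0 & \delta l \\ 0 & 0 \end{pmatrix}
\end{equation*}
both visibly lie in $N$. The key feature being used is that $x$ has a zero in the $(2,1)$-slot, which prevents the zero row/column structure of $u$ from being disturbed.

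There is no genuine obstacle here: the statement is purely a $2\times 2$ matrix fact and both assertions fall out of a one-line computation. The only thing to verify carefully is that the earlier derivation really did produce $x$ with vanishing $(2,1)$-entry, but that has already been recorded in the displayed form of $x$ just above the lemma.
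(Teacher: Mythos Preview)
Your proposal is correct and matches the paper's approach: the paper gives no explicit proof at all, simply stating the lemma after the computation of $x$ and $z$ and treating both assertions as immediate from the displayed matrix forms. Your one-line multiplications are exactly the verification the reader is expected to supply.
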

We also define the following subset in $M_6 (A)$: 
$$
\Gamma_N = \left\{ \left(
\begin{array}{ccc}
I_2 +* & * & *  \\
*  & I_2 +* & *  \\
*  &  *  &  I_2+*
\end{array}
\right) : \ \mathrm{each \ }*\ \mathrm{is\ in\ }N
 \right\}. 
$$
The lemma below is the key to proving Claim. 
\begin{lem}\label{lem:grpnilp}
In the setting above, the following hold:
\begin{itemize}
 \item[$(1)$]  The set $\Gamma_N$ is a subgroup of $\Gamma$. 
 \item[$(2)$]  The elements $X$ and $Y$ 
  normalize $\Gamma_N$. 
 \item[$(3)$]  Any homogeneous quasi-homomorphism $\phi$ on $\Gamma$ is bounded on $\Gamma_N$. This means that $\phi$ vanishes on $\Gamma_N$.
\end{itemize}
\end{lem}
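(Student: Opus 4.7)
The plan is to realize $\Gamma_N$ as an abelian subgroup of $\Gamma$ generated by mutually commuting elementary matrices, so that $(3)$ follows from the vanishing of $\phi$ on elementary matrices (Lemma~\ref{lem:triang}(1)) combined with additivity on commuting elements (Lemma~\ref{lem:hom}(1)). The structural facts $(1)$ and $(2)$ both reduce, via Lemma~\ref{lem:Nilp}, to straightforward block-matrix bookkeeping.

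For $(1)$, I would write a general element of $\Gamma_N$ as $I_6+M$ where $M=(M_{ij})_{1\le i,j\le 3}$ is a $3\times 3$ block matrix with each $M_{ij}\in N\subset M_2(A)$. Given two such elements, Lemma~\ref{lem:Nilp}(1) forces every block of the product $M_1 M_2$ to vanish, so $(I_6+M_1)(I_6+M_2)=I_6+M_1+M_2$ and $(I_6+M)^{-1}=I_6-M$. Thus $\Gamma_N$ is an abelian subgroup, isomorphic to $(A,+)^{\oplus 9}$, and it is generated by the nine elementary matrices $E_{pq}(a)$ whose nonzero entry sits at a position $(p,q)$ with $p\in\{1,3,5\}$ odd and $q\in\{2,4,6\}$ even; in particular $\Gamma_N\subseteq EL_6(A)=\Gamma$.

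For $(2)$, writing $X=I_6+N_X$ with $N_X$ supported only at block position $(1,2)$ by the upper triangular element $x$, one has $N_X^2=0$, hence
\[
X(I_6+M)X^{-1}=I_6+M+N_X M-M N_X-N_X M N_X.
\]
Lemma~\ref{lem:Nilp}(2), applied to the upper triangular $x$ and the $N$-valued blocks of $M$, ensures that every block of the three extra terms lies in $N$. For $Y=I_6+N_Y$, I would check by direct computation that $N_Y^2$ is supported only at block position $(3,1)$ and $N_Y^3=0$. Expanding $Y(I_6+M)Y^{-1}$ using $Y^{-1}=I_6-N_Y+N_Y^2$, each nontrivial contribution is a block of $M$ relocated by $N_Y$ or $N_Y^2$ (and never multiplied by another $N$-valued block), so all blocks remain in $N$ and $Y\Gamma_N Y^{-1}\subseteq\Gamma_N$.

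For $(3)$, a generic $g\in\Gamma_N$ decomposes as $\prod_{(p,q)\in S}E_{pq}(a_{pq})$ with $S$ the nine positions above. Any two such elementary matrices commute: the standard relation $[E_{pq}(a),E_{p'q'}(b)]=I_6$ holds whenever $q\neq p'$ and $q'\neq p$, and in our case $q,q'$ are even while $p,p'$ are odd. Iterating Lemma~\ref{lem:hom}(1) gives $\phi(g)=\sum_{(p,q)\in S}\phi(E_{pq}(a_{pq}))$, and each summand vanishes by Lemma~\ref{lem:triang}(1); so $\phi\equiv 0$ on $\Gamma_N$, a fortiori bounded there.

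I expect the main obstacle to be the bookkeeping in $(2)$: one must track how $N_Y^2\neq 0$ interacts with conjugation without allowing two $N$-valued blocks of $M$ to meet multiplicatively. Once one recognises left multiplication by $N_Y$ as a shift of block rows (and right multiplication by $N_Y$ as a shift of block columns) on $3\times 3$ block matrices, the verification is routine, but it is worth resisting the temptation to expand everything at once and instead reason by shift positions.
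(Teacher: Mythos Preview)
Your argument is correct. For $(1)$ and $(2)$ you spell out what the paper dismisses as ``straightforward from Lemma~\ref{lem:Nilp}''; the bookkeeping is accurate, and for $Y$ the check is in fact simpler than you indicate, since every block of $Y$ and $Y^{-1}$ is a scalar multiple of $I_2$, so conjugation by $Y$ sends each block of $M$ to an integer linear combination of blocks of $M$, all of which remain in $N$.

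For $(3)$ you take a somewhat different route from the paper. The paper decomposes an arbitrary $\gamma\in\Gamma_N$ as a product of one element of $U_6 A$ and one of $L_6 A$ (this works because every element of $N$ is strictly upper triangular inside its $2\times 2$ block), invokes Lemma~\ref{lem:triang}(2) on each factor to get $|\phi(\gamma)|\le\Delta(\phi)$, and then deduces vanishing from homogeneity. You instead exploit the stronger structural fact, established in your treatment of $(1)$, that $\Gamma_N$ is \emph{abelian}: writing $\gamma$ as a product of nine pairwise commuting elementary matrices (odd row, even column), you obtain $\phi(\gamma)=0$ directly from Lemma~\ref{lem:hom}(1) and Lemma~\ref{lem:triang}(1), bypassing the boundedness step. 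Your approach is a little more transparent and squeezes more information out of Lemma~\ref{lem:Nilp}(1); the paper's two-factor triangular decomposition is quicker to write and would still work in situations where the analogue of $\Gamma_N$ is merely nilpotent rather than abelian.
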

\begin{proof}(\textit{Lemma~\ref{lem:grpnilp}}\,)
$(1)$ and $(2)$ are straightforward from Lemma~\ref{lem:Nilp}. For $(3)$, we observe that any element $\gamma \in \Gamma_N$ can be decomposed as 
$$
\gamma = 
\left(
\begin{array}{ccc}
I_2+*  & * & *  \\
0  & I_2+*  & *  \\
0  &  0  &  I_2+* 
\end{array}
\right)
\left(
\begin{array}{ccc}
I_2  & 0 & 0  \\
*  & I_2  & 0  \\
*  &  *  &  I_2 
\end{array}
\right).
$$
Here each $*$ is in $N$. Lemma~\ref{lem:triang} ends our proof.
\end{proof}
We also need the following simple fact. Let 
$$
T= \left(
\begin{array}{ccc}
I_2  & 0 & 0  \\
0  & -I_2  & 0  \\
0  &  I_2  &  I_2 
\end{array}
\right) \in \Gamma.
$$
\begin{lem}\label{lem:inverse}
In the setting above, $T XT^{-1} = X^{-1}$ and $TYT^{-1} =Y^{-1}$ hold. 
\end{lem}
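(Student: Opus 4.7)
The statement is an explicit identity about $6 \times 6$ matrices, so my plan is a direct block-matrix computation; the choice of $T$ is tailored precisely to make it work, and there is no deeper structural argument needed.

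The plan is to work throughout with the $3 \times 3$ block decomposition in which each block lies in $M_2(A)$, treating $T$, $X$, $Y$ as elements of $GL_3(M_2(A))$. The first step is to observe that $T^2 = I_6$: the $(1,1)$ and $(2,2)$ blocks square to $I_2$ (the latter because $(-I_2)^2 = I_2$), and the $(3,2)$ block of $T^2$ is $I_2 \cdot (-I_2) + I_2 \cdot I_2 = 0$, while $(3,3) = I_2 \cdot I_2 = I_2$. Hence $T^{-1} = T$, and both identities reduce to computing $TXT$ and $TYT$ and matching them against $X^{-1}$ and $Y^{-1}$.

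For the first identity, I would write $X = I_6 + x\cdot E^{(3)}_{12}$, where $E^{(3)}_{12}$ is the block-elementary matrix with $I_2$ in block position $(1,2)$. Then $TXT = I_6 + x \cdot (T E^{(3)}_{12} T)$, and a direct block calculation shows $T E^{(3)}_{12} T = -E^{(3)}_{12}$, essentially because left multiplication by $T$ acts as the identity on block-row $1$ while right multiplication by $T$ picks up a sign from the $-I_2$ in the $(2,2)$ block. Hence $TXT = I_6 - x\cdot E^{(3)}_{12} = X^{-1}$.

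For the second identity, the same idea works but requires more bookkeeping because $Y$ has two nontrivial off-diagonal blocks. I would factor $Y = E^{(3)}_{21}(I_2)\, E^{(3)}_{32}(I_2)$, so that $Y^{-1} = E^{(3)}_{32}(-I_2)\, E^{(3)}_{21}(-I_2)$; explicitly, $Y^{-1}$ is the block-lower-triangular matrix with $-I_2$ in positions $(2,1)$ and $(3,2)$ and $I_2$ in position $(3,1)$. The conjugation $TYT$ is then computed block by block: the only subtle contribution is the $(3,1)$ block of $TYT$, which arises from the fact that $T$ adds block-row $2$ into block-row $3$ (and similarly on the right for columns), producing the extra $I_2$ needed in position $(3,1)$. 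The remaining sign changes in the $(2,1)$ and $(3,2)$ blocks come from the $-I_2$ factor in $T$. The main obstacle in the argument is simply keeping these signs and the cross terms straight; conceptually nothing is hidden, and once the computation is performed one reads off $TYT = Y^{-1}$.
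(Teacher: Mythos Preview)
Your approach is correct and is exactly what the paper intends: the lemma is stated there as a ``simple fact'' with no proof given, so the direct block computation you outline (checking $T^2=I_6$, then verifying $TXT=X^{-1}$ and $TYT=Y^{-1}$ entry by entry in $GL_3(M_2(A))$) is precisely the omitted verification.
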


Now we have all ingredients to complete our proof of Claim. The essential point is that Lemma~\ref{lem:gene} applies to the case that $H=\Gamma_N$, $h=Z$, and $g=XY$. (This follows from Lemma~\ref{lem:grpnilp}.) Thus, we show that 
$$
\phi (ZXY) = \phi (XY).
$$
From Lemma~\ref{lem:inverse}, we also see that $XY$ can be transfromed to $X^{-1}Y^{-1}$ by the conjugation by $T$. Because $X^{-1}Y^{-1}$ is conjugate to $Y^{-1}X^{-1}$, $XY$ is conjugate to its inverse. Finally, from Lemma~\ref{lem:vanish}, we have 
$$
\phi (ZXY) = \phi (XY) =0.
$$
\end{proof}
Therefore, we accomplish the proof of Theorem~\ref{thm:vanish}.  
\begin{flushright}
$\square$
\end{flushright}

\section{\textbf{Proof of Theorem~\ref{thm:ge2} and Theorem~\ref{thm:euclid}}}\label{sec:Thm}
One can immediately deduce Theorem~\ref{thm:ge2} from Theorem~\ref{thm:vanish} in the following way: Thanks to Proposition~\ref{prop:hom}, for the proof of Theorem~\ref{thm:ge2}, it suffices to show that every homogeneous quasi-homomorphism on $\Gamma$ is bounded. One obtains this boundedness from Theorem~\ref{thm:vanish} and bounded generation because ranges of every homogeneous quasi-homomorphism are bounded on the two subsets which appear in the bounded generation in condition $(2)$. This ends the proof  of Theorem~\ref{thm:ge2}.

To complete our proof of Theorem~\ref{thm:euclid}, it only remains to prove that a euclidean ring $R$ enjoys the two conditions of Theorem~\ref{thm:ge2} (for all $n$).
\begin{proof}(\textit{Theorem~\ref{thm:euclid}}\,)
It is obvious that $R$ satisfies condition $(1)$. For condition $(2)$, we will use a result by M. Newman, which can be found in the proof of \cite[Theorem2]{New}. 
 
\begin{thm}\label{thm:Newman}$($\cite{New}$)$
Let $R$ be a $($commutative$)$ principal ideal ring and $m$, $l$ be positive integers with $m \geq 3l$. Then any element in $\Gamma =SL_m (R)$ can be expressed as a product of two commutators in $\Gamma$ and some element in $G=SL_{m-l}(R)$, where we regard $G$ as the subgroup of $\Gamma$ in the left upper corner. 
\end{thm}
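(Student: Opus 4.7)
The plan is to combine classical elementary reduction over a PID with a block-commutator identity that becomes available once there are enough rows and columns. First I would use the PID structure of $R$ to put any $A \in SL_m(R)$ into a normal form relative to the subgroup $G$. Performing Bezout/Euclidean reductions on the last $l$ columns and then on the last $l$ rows of $A$, one obtains elementary matrices $E_1, E_2 \in EL_m(R)$ and an element $B \in SL_{m-l}(R)$ such that
$$
A \;=\; E_1 \cdot g \cdot E_2, \qquad g := \begin{pmatrix} B & 0 \\ 0 & I_l \end{pmatrix} \in G.
$$
Setting $E := E_1 \cdot (g E_2 g^{-1})$, we rewrite $A = E \cdot g$, where $E$ is still a product of elementary matrices in $EL_m(R)$ (conjugation by $g$ only relabels the generators).

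The heart of the argument is then to show that $E$ is a product of exactly two commutators in $\Gamma = SL_m(R)$, and this is where the hypothesis $m \geq 3l$ enters crucially. I would decompose $E = E_U \cdot E_L$ into a block unit upper-triangular factor and a block unit lower-triangular factor, each of which affects only $2l$ of the $m$ coordinates. Because $m \geq 3l$ leaves a third $l$-block of coordinates free, one can invoke a Whitehead-style identity: any block-unipotent matrix supported on $2l$ coordinates, with a third $l$-block available as a ``transport'' block, can be realised as a single commutator $[T_1,T_2]$ by the standard trick of conjugating a block shear by a block-permutation matrix that shuffles the three $l$-blocks. Applied to $E_U$ and to $E_L$ separately, this produces the decomposition $E = [T_1,T_2] \cdot [T_3,T_4]$, whence $A = [T_1,T_2] \cdot [T_3,T_4] \cdot g$ as required.

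The main obstacle is the bookkeeping in the reduction step: the conjugation $E_2 \mapsto g E_2 g^{-1}$ can spread the support of $E_2$ outside the last $l$ rows, so one must sequence the Bezout operations carefully --- and choose the order of column versus row reductions wisely --- to ensure that $E_U$ and $E_L$ genuinely remain confined to $2l$-block supports. The commutator identity itself is a routine computation once the block-triangular shape is in place; the delicate part is achieving that shape while respecting the constraint $m \geq 3l$, with the third $l$-block always available to serve as the transport block. This, I expect, is the technical core of Newman's original argument, and it is precisely why the bound $3l$ (rather than $2l$) is the natural one.
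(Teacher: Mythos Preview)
The paper does not supply its own proof of Theorem~\ref{thm:Newman}: the result is quoted from Newman~\cite{New} (extracted from the proof of his Theorem~2) and used as a black box in the deduction of Theorem~\ref{thm:euclid}. So there is no in-paper argument to compare your sketch against; I can only comment on the sketch itself.

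Your overall architecture --- reduce $A$ to $E\cdot g$ with $g\in G$ and $E$ built from elementary matrices, then realise $E$ as two commutators using a spare $l$-block as transport --- is the right shape, and the block Steinberg identity $e_{12}(X)=[e_{13}(X),e_{32}(I_l)]$ you allude to is indeed why $m\ge 3l$ is the natural threshold.

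The genuine gap is the passage $E=E_U\cdot E_L$ with each factor a \emph{single} block transvection supported on only $2l$ coordinates. You correctly flag this as ``the main obstacle,'' but then downgrade it to bookkeeping to be handled by ``sequencing the Bezout operations carefully.'' It is more than bookkeeping. A generic element of $EL_m(R)$ over a PID does \emph{not} factor as one unit-upper-triangular times one unit-lower-triangular matrix: if it did, $EL_m(R)$ would have elementary width at most $m(m-1)$, contradicting for instance van der Kallen's result~\cite{Kal} that $SL_3(\mathbb{C}[x])$ has unbounded word length. Hence the two-factor triangular form cannot emerge from a generic reduction; it must be engineered from the outset using the PID hypothesis together with the specific $l$-block geometry, and your sketch gives no mechanism for this. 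The conjugation $E_2\mapsto gE_2g^{-1}$ really does scramble supports, and ``choosing the order of column versus row reductions wisely'' is an aspiration, not an argument. Supplying that mechanism is precisely the content of Newman's proof, not a detail to be filled in afterwards; without it your outline yields only that $E$ is \emph{some} product of commutators, with no control on their number.
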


By applying  Theorem~\ref{thm:Newman} repeatedly (first we start from the case that $m=3$ and $l=1$), we obtain that  for $n\geq 3$, $SL_n (R)$ is boundedly generated by $SL_2 (R)$ and the set of all single commutators. This ends our proof.
\end{proof}

\begin{rem}
Let $k$ be a nonnegative integer and $A_k =\mathbb{Z}[x_1 , \ldots , x_k]$. The group $EL_n (A_k)$ is called the \textit{universal lattice} by Y. Shalom \cite{Shal}. It has a significant role because every group of the form $EL_n (A)$ for an associative, commutative and finitely generated ring $A$ with unit can be realized as a quotient group of some universal lattice. The Suslin stability theorem in \cite{Sus} states that if $n \geq 3$, then $EL_n (A_k)$ coincides with $SL_n (A_k)$. In contrast, for $k\geq 1$, $EL_2 (A_k)$ is very small in $SL_2 (A_k)$. (For details and precise meaning, see \cite{GMV} and \cite{Coh}.) A deep theorem of L. Vaserstein \cite{Vas} states that for $n\geq 3$, the universal lattice $\Gamma =SL_n (A_k)$ is boundedly generated by $G=SL_2 (A_k)$ and the set of all elementary matrices in $\Gamma$.

Suppose the following stronger variant of Vaserstein's bounded generation is valid: ``bounded generation of universal lattices by $G'=EL_2 (A_k)$ and  the set of all elementary matrices in $\Gamma$". Then it would imply the following notable ``corollary": ``For $n\geq 6$, the universal lattice $SL_n (A_k)$ does not admit unbounded quasi-homomorphisms." However, the author has no idea whether that bounded generation is true. (Actually, to ascertain the ``corollary", the bounded generation of universal lattices by $G'$ and the set of all single commutators is sufficient. Nevertheless, the author does not know again whether this weaker one holds.)
\end{rem}

\section*{acknowledgments}
The author thanks his supervisor Narutaka Ozawa for advice. He is also grateful to Professor Koji Fujiwara, Professor Nicolas Monod, and Professor Leonid Vaserstein for fruitful comments. The author is supported by Research Fellowships of the Japan Society for the Promotion of Science for Young Scientists No.20-8313.

\bigskip

\begin{scriptsize}
\begin{center}
Graduate School of Mathematical Sciences, 
University of Tokyo, Komaba, Tokyo, 153-8914, Japan 

\noindent
e-mail: mimurac@ms.u-tokyo.ac.jp

\end{center}
\end{scriptsize}
\end{document}